\theoremstyle{definition}
\newtheorem{thm}{Theorem}[section]
\newtheorem{lem}[thm]{Lemma}
\newtheorem{prop}[thm]{Proposition}
\newtheorem{defn}[thm]{Definition}
\newtheorem{ex}[thm]{Example}
\numberwithin{equation}{section}
\newcommand{\ddt}[1]{\frac{\mathrm{d}#1}{\mathrm{d}t}}
\title{A combinatorial curvature flow in spherical background geometry}
\author{Huabin Ge~~
	Bobo Hua~~
	Puchun Zhou
}
\date{}
\providecommand{\classification}[1]
{
	\small	
	\textbf{Mathematics Subject Classification (2020):} #1
}
\begin{document}
	\maketitle
	\begin{abstract}
	In \cite{ge2021combinatorial}, the existence of ideal circle patterns in Euclidean or hyperbolic background geometry under the combinatorial conditions was proved using flow approaches. It remains as an open problem for the spherical case. In this paper, we introduce a combinatorial geodesic curvature flow in spherical background geometry, which is analogous to the combinatorial Ricci flow of Chow and Luo in \cite{MR2015261}. 
	We characterize the sufficient and necessary condition for the convergence of the flow. That is, the prescribed geodesic curvature satisfies certain geometric and combinatorial condition if and only if for any initial data the flow converges exponentially fast to a circle pattern with given total geodesic curvature on each circle. Our result could be regarded as a resolution of the problem in the spherical case. As far as we know, this is the first combinatorial curvature flow in spherical background geometry with fine properties, and it provides an algorithm to find the desired ideal circle pattern. 	\\
		\classification{52C26, 51M10, 57M50
		}	
		
	\end{abstract}
	
	\section{Introduction} 
 		Circle patterns are used for constructing hyperbolic 3-manifolds by Thurston \cite{thurston1976geometry}. The existence and uniqueness of certain type of circle patterns on surfaces are known as the Koebe-Andreev-Thurston theorem, which has many different proofs in the literature. Colin de Verdi\`eres \cite{MR1106755} proposed a variational principle, and proved that the existence of circle patterns corresponds to the existence of minimizers of some potential function. See \cite{MR2022715,guo2007note} for related results.
	 
		Inspired by Ricci flows on Riemannian manifolds, Chow and Luo introduced the combinatorial Ricci flow in \cite{MR2015261}
		\begin{align*}
			\ddt{r_i} = -K_is(r_i),
		\end{align*}
		where $s(r)=r$ ($\sin r$ or $\sinh r$) for Euclidean (spherical or hyperbolic) background geometry, and $K_i$ is the discrete Gaussian curvature given by the difference of $2\pi$ and the cone angle at the center of the circle $i.$ They proved that the combinatorial Ricci flow on a surface of genus at least one converges to a smooth Euclidean or hyperbolic metric under certain combinatorial conditions. 
After that, there are lots of works on combinatorial curvature flows; see e.g. \cite{MR2100762,MR2136535,gu2008computational,MR3818085,MR3825607,MR3807319,ge2018combinatorial,luo2019koebe,ge20203,ge2021combinatorial,ge2022deformation,feng2022combinatorial2}.

		However, there are less results on circle patterns in spherical background geometry. Main difficulties are as follows: related potential functions are not convex in the spherical case (see e.g. \cite{MR3178441}), and the uniqueness of circle patterns fails due to the invariance under the M\"obius transformation. 
		Recently, Nie \cite{nie2023circle} proposes a new potential function, which is convex for circle patterns with conic singularities in the spherical case. In this paper, we introduce a combinatorial prescribed geodesic curvature flow, and prove the convergence of the flow to the desired circle pattern.
		
		\subsection{Spherical circle patterns on surfaces}
		We introduce the setting of circle patterns. Let $(V,E)$ be a graph and $\Sigma$ a closed surface. Let $\eta: V\cup E\to \Sigma$ be a graph embedding. A face is a connected component of $\Sigma\setminus \eta(V\cup E),$ and $F$ is the set of faces induced by $\eta$. We will not distinguish vertices (or edges) with their images via $\eta.$ We call an embedding $\eta$ a closed 2-cell embedding if the following hold; see e.g. \cite{barnette1987generating}:
		\begin{enumerate}
			\item The closure of every face is homeomorphic to a closed disk.
			\item Any face is bounded by a simple closed curve consists of finite many edges.
		\end{enumerate}
		 Let $G=(V,E,F)$ be a closed 2-cell embedding in a closed surface $\Sigma$ with vertex set $V$, edge set $E$ and face set $F$.
		 We write $v<e$ ($e<f$ resp.) if a vertex $v$ (an edge $e$ resp.) is incident to an edge $e$ (a face $f$ resp.). Besides, we write $v_1\sim v_2$  if vertices $v_1$ and $v_2$ are linked by an edge. For a set of vertices $X\subseteq V$, we denote by $E(X)$ the edges which are incident to some vertices in $X$. For any face $f$, we add an auxiliary vertex $v_f$ in the interior of the face, labelled as a small triangle in Figure \ref{fig01}. We denote by $V_F$ the set of those auxiliary vertices. We define the incidence graph of $G$; see e.g. \cite{coxeter1950self}.

		\begin{figure}[H]
		\centering
		\includegraphics[width=5in]{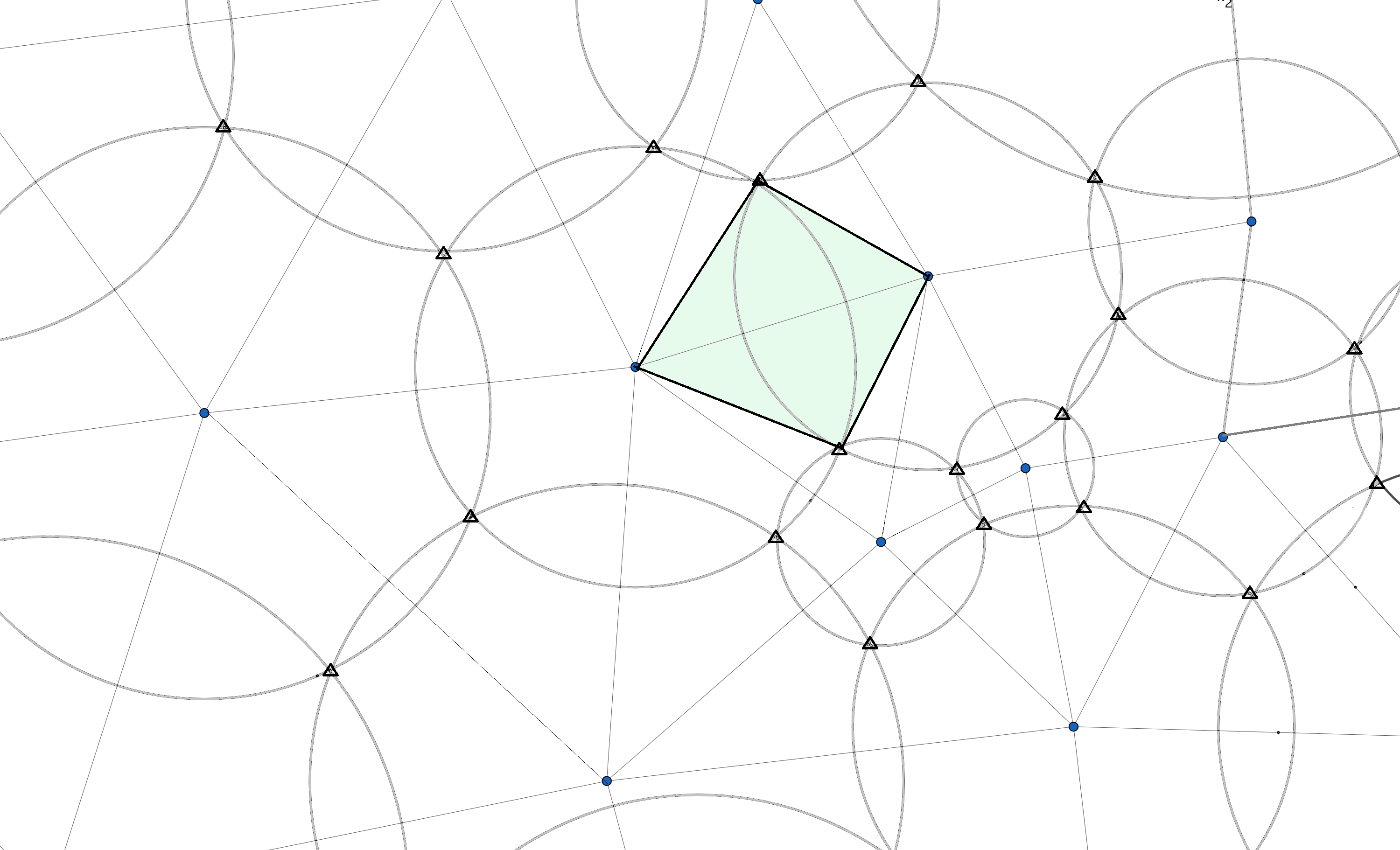}
		\caption{The picture of a circle pattern}
		\label{fig01}
	\end{figure}

	 \begin{defn}
	An incidence graph $I(G)$ is a bipartite graph with the bipartition $\{V,V_F\}$. For $v\in V$ and $v_f\in V_F$, $v$ and $v_f$ are adjacent in $I(G)$ if and only if $v$ is on the boundary of the face $f.$
\end{defn}
	
	Note that the incidence graph $I(G)$ induces a closed 2-cell embedding with quadrilateral faces.
For each edge in $G,$ it associates with a unique quadrilateral in $I(G)$ containing it. We assign a radius $r_v\in(0,\frac{\pi}{2})$ for each $v\in V$, and an intersection angle $\Theta(e)\in (0,\frac{\pi}{2}]$ for each edge $e\in E.$ We define a piecewise spherical metric for $\Sigma$ via the radii and  intersection angles.	
		 For each edge $e_0=\{v_1,v_2\}\in E,$ we write $A$ and $B$ for the remaining two vertices of the quadrilateral associated with $e_0$ as in Figure \ref{fig02}. 
		 We endow the quadrilateral $v_1Av_2B$ with the metric of a spherical quadrilateral with $|v_1A|=|v_1B|=r_{v_1}$, $|v_2A|=|v_2B|=r_{v_2}$ and angles $\angle v_1Av_2=\angle v_1Bv_2=\pi-\Theta(e_0)$ via the intersection of two spherical disks. This spherical quadrilateral is called a spherical bigon determined by $r_{v_1},r_{v_2}$ and $\Theta(e_0).$
		We obtain a metric structure on $\Sigma,$ denoted by $S(\Sigma),$ via gluing those quadrilaterals (spherical bigons) along common edges in $I(G)$, see the gluing procedure in \cite[Chapter 3]{burago2022course}, which is a piecewise spherical metric with possible conic singularities.

	We denote by $C_v$ the circumcircle with center $v$ and radius $r_v$ in $S(\Sigma).$ Those circumcircles form a circle pattern $\mathcal{C}$ on $S(\Sigma)$ as in Figure \ref{fig01}. This is called the ideal circle pattern in the literature; see \cite{MR2022715} and Definition~1.1 in \cite{ge2021combinatorial}. Possible conic singularities of $S(\Sigma)$ only appear in $V$ and $V_F.$ We denote by $\alpha_i$ the cone angle at the point $v_i\in V$, which equals to the sum of angles of quadrilaterals at $v_i$. Besides, the cone angle at $v_f\in V_F$ equals to $\sum_{e<f}(\pi-\Theta(e))$. If all the cone angles equal to $2\pi$, we obtain a surface with smooth spherical metric. By $k_v$ we denote the geodesic curvature on $C_v$, which equals to $\cot r_v$. The length of $C_v,$ denoted by $l_v,$ is $\alpha_v\sin r_v.$ Hence the total geodesic curvature of $C_v,$ denoted by $L_v,$ is given by $l_vk_v=\alpha_v\cos r_v$.

   \begin{figure}[H]
	\centering
	\includegraphics[width=4in]{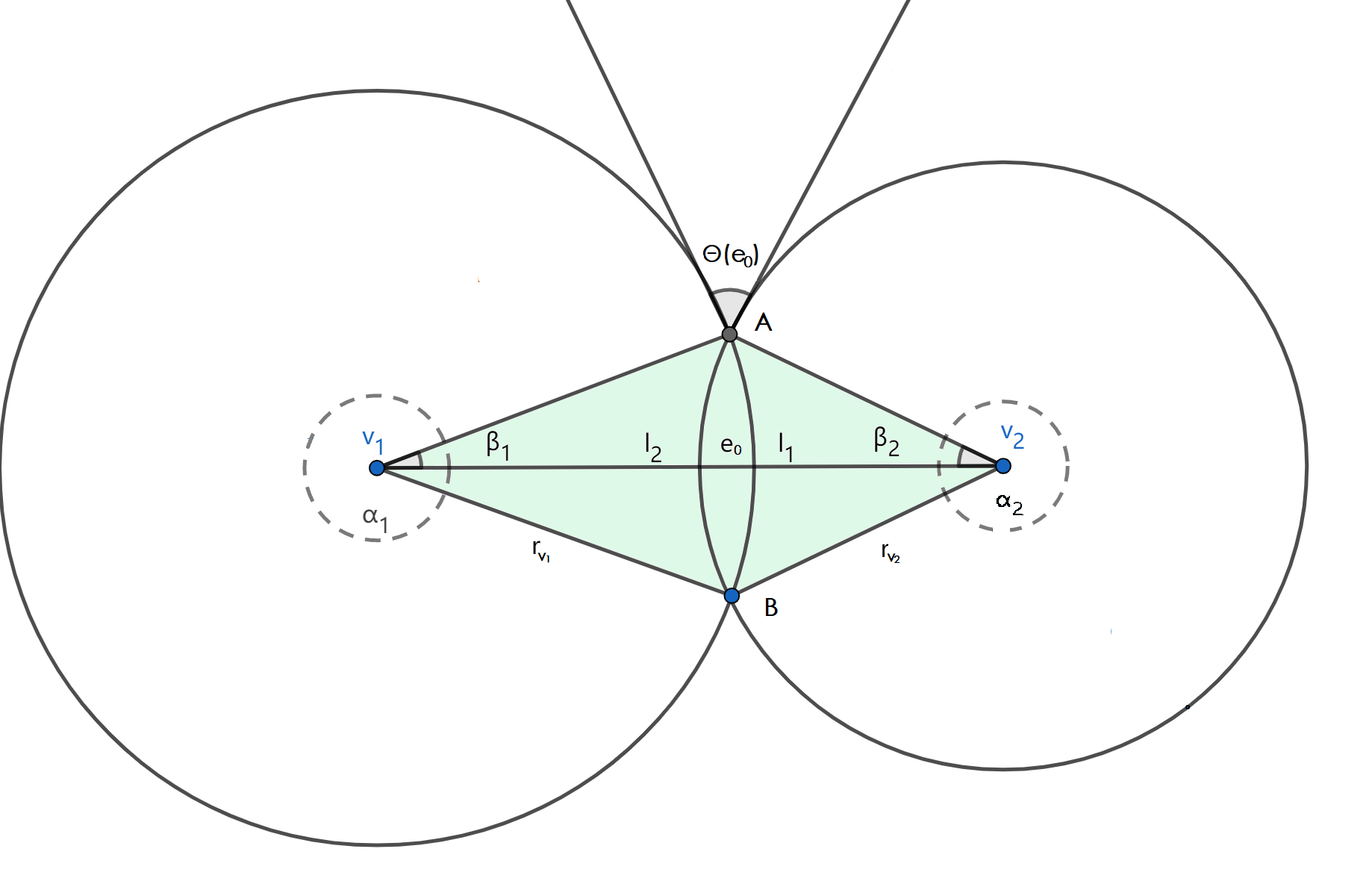}
	\caption{Two intersecting circles and the quadrilateral.}
	\label{fig02}
\end{figure}

	\subsection{Prescribed geodesic curvature flows}
	The aim of our article is to study the prescribed geodesic curvature problem of circle patterns. Given intersection angles $\{\Theta(e)\}_{e\in E}$ and prescribed total geodesic curvatures $\{\hat{L}_v\}_{v\in V}$, we want to find radii $\{r_v\}_{v\in V}$ such that the total geodesic curvature of each $C_v$ in $S(\Sigma)$ satisfies $$L_v=\hat{L}_v,\quad \forall v\in V.$$ To solve this problem, we introduce the following prescribed geodesic curvature flow
	    \begin{align}
		\ddt{r_v}=\frac{(L_v-\hat{L}_v)}{2}\sin(2r_v),\quad \forall v\in V\label{flow}.
		\end{align}
		We prove the long time existence and the uniqueness of the flow in Theorem~\ref{thm:longtime}.
We say that the flow converges if there is $r_v^*\in(0,\frac{\pi}{2})$ for each $v\in V$, such that $r_v(t)\rightarrow r_v^*$ as $t\rightarrow\infty$. 
For the prescribed geodesic curvature flow, we have the main result.
\begin{thm}\label{thm1}
	Let $G=(V,E,F)$ be a closed 2-cell embedding in a closed surface $\Sigma.$ Let $\Theta(e)\in (0,\frac{\pi}{2}),$ $e\in E,$ be intersection angles, and $\hat{L}_v>0$ for $v\in V.$ The following are equivalent:
\begin{enumerate}[i.]
	\item The prescribed geodesic curvature flow \eqref{flow} with $\{\hat{L}_v\}_{v\in V}$  converges for any initial data.
	\item The prescribed total geodesic curvature $\{\hat{L}_v\}_{v\in V}$ satisfies
	\begin{align}
		\sum_{v\in X}\hat{L}_v<2\sum_{e\in E(X)}\Theta(e),\quad\quad \forall X\subseteq V\label{condition}.
	\end{align}
\end{enumerate}
	Moreover, if the flow \eqref{flow} converges, then it converges exponentially fast to a unique circle pattern with $L_v=\hat{L}_v$ for each vertex $v$.
\end{thm}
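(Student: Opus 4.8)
The plan is to recast the flow as the negative gradient flow of a strictly convex energy and then reduce the whole theorem to a single coercivity statement. First substitute $u_v=\log\tan r_v$, so that $u_v$ ranges over $\mathbb{R}$ and \eqref{flow} becomes the autonomous system $\dot u_v=L_v-\hat L_v$ on $\mathbb{R}^V$, whose long‑time existence and uniqueness is Theorem~\ref{thm:longtime}. Next I would record the variational structure: a Schläfli‑type computation on a single spherical bigon gives the integrability identity $\partial L_v/\partial u_w=\partial L_w/\partial u_v$, so
\begin{align*}
\mathbb{E}(u):=\int_{u_0}^{u}\sum_{v\in V}(\hat L_v-L_v)\,\mathrm{d}u_v
\end{align*}
is a well‑defined $C^2$ function with $\nabla\mathbb{E}=(\hat L_v-L_v)_v$ and $\mathrm{Hess}\,\mathbb{E}=-\big(\partial L_v/\partial u_w\big)_{v,w}$; building on the convexity of the spherical potential (cf. Nie \cite{nie2023circle} together with the estimates for one bigon) this Hessian is positive definite. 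Hence, in the $u$‑coordinates, \eqref{flow} is the negative gradient flow of the strictly convex function $\mathbb{E}$, and the theorem will follow once we prove: $\mathbb{E}$ is proper $\iff$ \eqref{condition}, together with the elementary consequences of strict convexity plus properness (unique critical point, compact sublevel sets, global and exponential convergence).

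The geometric engine is the identity obtained by applying Gauss--Bonnet to the lens $D_v\cap D_w$ cut out along an edge $e=\{v,w\}$ by the two spherical disks $D_v,D_w$ of radii $r_v,r_w$: its two boundary arcs carry total geodesic curvatures $\cos r_v\,\theta_v^e$ and $\cos r_w\,\theta_w^e$ (with $\theta_v^e$ the angle of the bigon at $v$), and both corners have interior angle $\Theta(e)$, whence
\begin{align*}
\cos r_v\,\theta_v^e+\cos r_w\,\theta_w^e=2\Theta(e)-\mathrm{Area}(D_v\cap D_w)<2\Theta(e).
\end{align*}
Summing $L_v=\cos r_v\sum_{e\ni v}\theta_v^e$ over $v\in X$ and regrouping over edges yields, for \emph{every} configuration of radii and every $\varnothing\neq X\subseteq V$, the strict inequality $\sum_{v\in X}L_v<2\sum_{e\in E(X)}\Theta(e)$. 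This already gives (i)$\Rightarrow$(ii): if \eqref{condition} fails at some $X_0$, then $\frac{\mathrm d}{\mathrm dt}\sum_{v\in X_0}u_v=\sum_{v\in X_0}(L_v-\hat L_v)$ is negative along the flow at all times, and if the flow converged to $u^\ast$ it would tend to $\sum_{v\in X_0}L_v(u^\ast)-\sum_{v\in X_0}\hat L_v<0$, forcing $\sum_{v\in X_0}u_v\to-\infty$, a contradiction; so the flow converges for no initial data.

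For (ii)$\Rightarrow$(i) assume \eqref{condition}. By convexity $\mathbb{E}$ is proper once $\lim_{t\to\infty}\big\langle\nabla\mathbb{E}(u_0+t\xi),\xi\big\rangle=\lim_{t\to\infty}\sum_{v}(\hat L_v-L_v)\xi_v>0$ for every $\xi\neq0$. Split $V=V_+\sqcup V_0\sqcup V_-$ according to whether $\xi_v>0$, $=0$, or $<0$. As $t\to\infty$ one has $r_v\to\pi/2$ on $V_+$, so $L_v\to0$ there (since $\cos r_v\to0$ while each $\theta_v^e$ stays bounded), and $r_v\to0$ on $V_-$; the lens identity above—whose area term vanishes as a radius shrinks—shows that along every edge $e\in E(V_-)$ the angle concentrates on the faster‑shrinking endpoint, so that the edge's total contribution to $\lim\sum_{v\in V_-}L_v|\xi_v|$ is exactly $2\Theta(e)\,\max_{u\in e\cap V_-}|\xi_u|$. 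Writing $|\xi_v|$ and these edge‑maxima as nonnegative combinations of indicators of the superlevel sets $W_k=\{v\in V_-:|\xi_v|\geq c_k\}$, the limit turns into a nonnegative combination of the numbers $2\sum_{e\in E(W_k)}\Theta(e)-\sum_{v\in W_k}\hat L_v$—each positive by \eqref{condition}—plus the nonnegative term $\sum_{v\in V_+}\hat L_v\xi_v$, hence is $>0$ (if $V_-=\varnothing$ that last term alone is positive, using $\xi\neq0$). Thus $\mathbb{E}$ is strictly convex and proper, so it has a unique critical point $u^\ast$, i.e. a unique circle pattern with $L_v=\hat L_v$ for all $v$; the negative gradient flow from any $u_0$ stays in the compact sublevel set $\{\mathbb{E}\le\mathbb{E}(u_0)\}$ and converges to $u^\ast$. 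Finally, since $\mathrm{Hess}\,\mathbb{E}(u^\ast)$ is positive definite, linearizing $\dot u=-\nabla\mathbb{E}(u)$ at $u^\ast$ gives $|u(t)-u^\ast|\le Ce^{-ct}$, and hence $|r_v(t)-r_v^\ast|\le C'e^{-ct}$.

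The step I expect to be the main obstacle is the asymptotic analysis just sketched: pinning down $\lim_{t\to\infty}L_v(u_0+t\xi)$ in all sign patterns, especially when two adjacent vertices shrink at comparable rates—there only the sum $\theta_v^e+\theta_w^e$, not the individual angles, has a clean limit, and the Gauss--Bonnet lens identity is precisely what keeps this under control. The other substantial ingredient, needed already for the strict convexity of $\mathbb{E}$, is the positive definiteness of $\big(\partial L_v/\partial u_w\big)$; this is where the spherical case is genuinely harder than the Euclidean and hyperbolic ones, and where the convexity of Nie's potential enters the argument.
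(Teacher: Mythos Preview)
Your proposal is correct and shares the paper's overall architecture: both recast \eqref{flow} as the negative gradient flow of a strictly convex potential (the paper via $K_v=\ln\cot r_v=-u_v$, so your $\mathbb{E}$ is Nie's $\mathcal{E}$ up to sign of the variable), reduce the theorem to the equivalence between \eqref{condition} and properness of that potential, and finish with linearization at the critical point for exponential convergence. Where you genuinely diverge is in how that equivalence is established. The paper quotes Nie's Theorem~\ref{Nie}, whose proof shows that the curvature map $K\mapsto(L_v)_v$ is a bijection onto the polytope cut out by \eqref{condition} via the invariance of domain theorem together with a closedness argument in the style of Bobenko--Springborn and Guo; properness then follows from the existence of a critical point plus Lemma~\ref{convex}. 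You instead prove properness directly by asymptotic analysis along rays, using the Gauss--Bonnet lens identity to compute the limiting bigon contributions and a layer-cake decomposition over the superlevel sets $W_k$ to rewrite the limit as a positive combination of the slacks in \eqref{condition}. Your route is more elementary and self-contained---no topological input---at the price of the asymptotics you correctly flag as the crux (adjacent vertices in $V_-$ shrinking at equal rates, where only the sum $\theta_v^e+\theta_w^e$ has a clean limit); the paper's route is shorter on the page because it absorbs all of that work into Nie's result. Likewise, for (i)$\Rightarrow$(ii) the paper argues convergence $\Rightarrow$ critical point $\Rightarrow$ proper (Lemma~\ref{convex}) $\Rightarrow$ \eqref{condition} (Theorem~\ref{Nie}), whereas your monotone-quantity argument on $\sum_{v\in X_0}u_v$ is more direct and in fact yields the stronger conclusion that the flow diverges from \emph{every} initial datum once \eqref{condition} fails.
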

The proof strategy is as follows: we observe that the prescribed geodesic curvature flow \eqref{flow}
 is a negative gradient flow of Nie's potential function \eqref{eq:nie}. By \eqref{condition}, the function is
 a proper strictly convex function. Using the theory of gradient flows, we prove the desired result.
 Note that the result of exponential convergence helps for developing an algorithm for the numerical computation of the metric.
 
 The paper is organized as follows: in next section, we recall the variational principle for the prescribed geodesic curvature flow. Section~\ref{sec:3} is devoted to the proof of Theorem \ref{thm1}.
   
\section{Variational principle}	
	The variational principle of the total geodesic curvature in spherical background geometry was introduced in \cite{nie2023circle}. 
	 In the rest of the paper, we write $V=\{v_i\}_{i=1}^N$. Consider a spherical bigon associated with an edge $e_0,$ see Figure \ref{fig02}.
	Let $l_i(i=1,2)$ denote the length of the arc of $C_{v_i} (i=1,2),$ which is contained in the quadrilateral (spherical bigon). $L_i$ is the total geodesic curvature on that arc.
	 By Gauss-Bonnet theorem, one obtains that \begin{align}
	 	A=2\Theta(e_0)-L_1-L_2 \label{gauss},
	 \end{align} 
 where $A$ is the area of the intersection part of two spherical disks. Let $K_i$ denote $\ln k_{v_i}$. Then we have the key variational formula.
	\begin{lem}\label{vari} For a spherical bigon,
		$l_1dk_{v_1}+l_2dk_{v_2}=L_1dK_1+L_2dK_2$ is a closed form. Moreover, we have 
		\begin{align}
			\frac{\partial L_1}{\partial K_2}<0,
			\frac{\partial (L_1+L_2)}{\partial K_1}>0.
		\end{align}
	\end{lem}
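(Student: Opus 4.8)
The plan is to reduce the whole statement to elementary spherical trigonometry in the triangle underlying the bigon, followed by three explicit sign computations. Write $r_i:=r_{v_i}$ and $\psi:=\pi-\Theta(e_0)$, and let $T$ be the spherical triangle $v_1Av_2$ with side lengths $|v_1A|=r_1$, $|v_2A|=r_2$ and included angle $\psi$ at $A$; let $\phi_i$ denote the angle of $T$ at $v_i$. Since the bigon is symmetric across the geodesic through $v_1$ and $v_2$, its angle at $v_i$ equals $2\phi_i$, so the arc of $C_{v_i}$ lying in the bigon has length $l_i=2\phi_i\sin r_i$ and hence $L_i=l_ik_{v_i}=2\phi_i\cos r_i$. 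Because $K_i=\ln k_{v_i}$ gives $dK_i=dk_{v_i}/k_{v_i}$, the identity $l_i\,dk_{v_i}=L_i\,dK_i$ is immediate, so the two $1$-forms in the statement coincide; the real content is the closedness and the two inequalities.

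I would work on the two-dimensional configuration space with coordinates $(r_1,r_2)$, noting that each $K_i$ depends on $r_i$ alone with $dr_i/dK_i=-\tfrac12\sin 2r_i<0$. The key input is the four-parts (cotangent) formula in $T$ applied to the consecutive parts $\phi_1,r_1,\psi,r_2$, which yields
\begin{equation}\label{eq:cotphi}
	\cot\phi_1=\frac{\sin r_1\cot r_2+\cos r_1\cos\Theta(e_0)}{\sin\Theta(e_0)}
\end{equation}
together with its mirror image under $1\leftrightarrow 2$. Differentiating \eqref{eq:cotphi} gives
\begin{equation*}
	\frac{\partial\phi_1}{\partial r_2}=\frac{\sin^2\phi_1\,\sin r_1}{\sin\Theta(e_0)\,\sin^2 r_2}>0,\qquad
	\frac{\partial\phi_1}{\partial r_1}=-\frac{\sin^2\phi_1}{\sin\Theta(e_0)}\bigl(\cos r_1\cot r_2-\sin r_1\cos\Theta(e_0)\bigr),
\end{equation*}
and the analogous formulas for $\phi_2$. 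From $L_1=2\phi_1\cos r_1$ and $\partial\phi_1/\partial r_2>0$ one obtains $\partial L_1/\partial K_2=2\cos r_1\,(\partial\phi_1/\partial r_2)\,(dr_2/dK_2)<0$ at once. For the closedness one needs $\partial L_1/\partial K_2=\partial L_2/\partial K_1$, which after clearing the factors $dr_i/dK_i$ is equivalent to $\sin r_2\,(\partial\phi_1/\partial r_2)=\sin r_1\,(\partial\phi_2/\partial r_1)$; substituting the derivative formulas reduces this to $\sin\phi_1\sin r_1=\sin\phi_2\sin r_2$, which is precisely the spherical law of sines in $T$.

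It then remains to prove $\partial(L_1+L_2)/\partial K_1>0$, equivalently $\partial(L_1+L_2)/\partial r_1<0$. For this I would compute
\begin{equation*}
	\frac{\partial(L_1+L_2)}{\partial r_1}=2\cos r_1\frac{\partial\phi_1}{\partial r_1}-2\phi_1\sin r_1+2\cos r_2\frac{\partial\phi_2}{\partial r_1},
\end{equation*}
substitute the derivative formulas, eliminate $\sin^2\phi_2$ using the law of sines $\sin\phi_2\sin r_2=\sin\phi_1\sin r_1$, and then re-use \eqref{eq:cotphi} to collapse the surviving bracket. Everything cancels down to
\begin{equation*}
	\frac{\partial(L_1+L_2)}{\partial r_1}=2\sin r_1\Bigl(\tfrac12\sin 2\phi_1-\phi_1\Bigr)<0,
\end{equation*}
since $\sin x<x$ for $x>0$. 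Multiplying by $dr_1/dK_1=-\tfrac12\sin 2r_1<0$ completes the proof; geometrically, this last inequality just says that the bigon area $A=2\Theta(e_0)-L_1-L_2$ is increasing in $r_1$.

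The main obstacle is the cancellation in the last display: it is not apparent a priori, and it emerges only after one uses the cotangent identity \eqref{eq:cotphi} and the law of sines in $T$ together, so I would organize the computation so that exactly these two facts are the trigonometric inputs and everything else is bookkeeping. A more conceptual but less self-contained alternative would be to realize the two spherical disks as totally geodesic half-planes of $\mathbb{H}^3$ via $S^2\cong\partial_\infty\mathbb{H}^3$ and to read off both the closedness and the monotonicity from Schl\"afli's differential formula for the resulting hyperideal polyhedron; for the paper I would keep the elementary version.
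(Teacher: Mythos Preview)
Your argument is correct and follows essentially the same route as the paper's own proof in the Appendix: both use the cotangent four-parts formula in the triangle $v_1Av_2$, differentiate it, invoke the spherical law of sines to get the symmetry $\partial L_1/\partial K_2=\partial L_2/\partial K_1$, and then feed the cotangent identity back in to collapse $\partial(L_1+L_2)/\partial K_1$ to $\cos r_1\sin^2 r_1\,(2\phi_1-\sin 2\phi_1)$. The only differences are cosmetic---you differentiate in $r_i$ while the paper differentiates in $k_i$, and your $\phi_i$ is the paper's $\beta_i$---and your intermediate displays and final formula match the paper's exactly after the change of variable $dr_1/dK_1=-\tfrac12\sin 2r_1$.
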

	Intuitively, if the circle $C_{v_1}$ is fixed and the radius of $C_{v_2}$ increases, both the area of intersection part and the length $l_1$ increase. The detailed proof can be found in the appendix. We write $$\omega(e_0):=L_1dK_1+L_2dK_2,\quad (K_1,K_2)\in\mathbb{R}^{2}.$$  There is a convex potential function $$\mathcal{E}_{e_0}(K_1,K_2)=\int^{(K_1,K_2)}\omega(e_0).$$
 
 Similarly, we define a potential function $\mathcal{E}_e$ for each edge $e$.
 For $\{\hat{L}_v\}_{v\in V}$, the following potential function was introduced by Nie \cite{nie2023circle}
 \begin{equation}\label{eq:nie}\mathcal{E}(K)=\sum_{e=\{u,w\}\in E}\mathcal{E}_e(K_u,K_w)-\sum_{v\in V}\hat{L}_vK_v.\end{equation}

Now we consider the prescribed geodesic curvature flow.
By the change of variables $K_v=\ln \cot r_v,$ the flow \eqref{flow} is reformulated as

\begin{align}
	\ddt{K_v} = -(L_v-\hat{L}_v)\label{2.2}.
\end{align} One verifies that
the flow \eqref{2.2} is a negative gradient flow of the convex function \eqref{eq:nie}. Moreover, if it has a critical point, it provides the circle pattern with prescribed total geodesic curvatures.

\section{The proof of the main theorem}\label{sec:3}
In this section, we prove the main result, Theorem \ref{thm1}.

For $V=\{v_i\}_{i=1}^N,$ we write 
$$r=(r_{v_1},\cdots, r_{v_N})=(r_{1},\cdots, r_{N}),\quad K=(K_{v_1},\cdots, K_{v_N})=(K_{1},\cdots, K_{N}),$$ where $K_{v_i}=\ln\cot r_{v_i}$ for any $v_i.$ We consider the domain of the variable $r\in (0,\frac{\pi}{2})^N,$ which is 1-1 corresponding to $K\in \mathbb{R}^N.$

We first prove the long time existence and uniqueness of the geodesic curvature flow.
\begin{thm}\label{thm:longtime}
Given $\hat{L}_v>0$ for any $v\in V,$ for any initial data $r_0\in (0,\frac{\pi}{2})^N$ ($K_0\in \mathbb{R}^N$ resp.), the flow \eqref{flow} (\eqref{2.2} resp.) has a unique solution $r:[0,\infty)\to (0,\frac{\pi}{2})^N$ ($K:[0,\infty)\to \mathbb{R}^N$ resp.).
\end{thm}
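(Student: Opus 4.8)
The plan is to pass to the $K$-coordinates, in which \eqref{flow} becomes the autonomous system \eqref{2.2}, to invoke the Picard--Lindel\"of theorem for local existence and uniqueness, and then to rule out finite-time blow-up by means of a uniform bound on $L_v$ coming from Gauss-Bonnet. First I would record that $r\mapsto K$ with $K_v=\ln\cot r_v$ is a real-analytic diffeomorphism from $(0,\frac{\pi}{2})^N$ onto $\mathbb{R}^N$; since \eqref{flow} and \eqref{2.2} correspond to each other under this change of variables, it suffices to treat \eqref{2.2}. For that I need the right-hand side $-(L_v-\hat{L}_v)$ to be locally Lipschitz as a function of $K$. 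This follows from the explicit spherical geometry of the bigons: for every $r_{v_1},r_{v_2}\in(0,\frac{\pi}{2})$ and $\Theta(e_0)\in(0,\frac{\pi}{2}]$ the circles $C_{v_1},C_{v_2}$ meet transversally in two points --- one verifies $|r_{v_1}-r_{v_2}|<|v_1v_2|<r_{v_1}+r_{v_2}$ from the spherical law of cosines --- so the arc lengths $l_i$ and the total geodesic curvatures $L_i$ depend real-analytically on $(r_{v_1},r_{v_2})$, and hence $L_v=\sum_{v<e}L_v^e$, the sum over the edges incident to $v$ of the total geodesic curvature of the corresponding arc of $C_v$, is real-analytic in $K\in\mathbb{R}^N$. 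Picard--Lindel\"of then yields a unique maximal solution $K\colon[0,T_{\max})\to\mathbb{R}^N$ for every initial datum.

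The heart of the matter is an a priori bound on $L_v$. For an edge $e_0=\{u,w\}$ the area $A$ of the intersection of the two disks in \eqref{gauss} is strictly positive, so $L_u^{e_0}+L_w^{e_0}=2\Theta(e_0)-A<2\Theta(e_0)$; moreover each $L_v^{e_0}>0$, being the product of a positive arc length with the positive geodesic curvature $\cot r_v$. Summing over the edges incident to a vertex $v$ gives $0<L_v<2\sum_{v<e}\Theta(e)=:C_v$, and --- crucially --- this bound holds at every point of the parameter space, not merely along one trajectory. Consequently $|\dot K_v|=|L_v-\hat{L}_v|\le\max\{\hat{L}_v,\,C_v-\hat{L}_v\}=:M_v$ everywhere, i.e.\ the vector field of \eqref{2.2} is globally bounded on $\mathbb{R}^N$.

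A globally bounded vector field cannot generate finite-time blow-up: along the maximal solution $|K_v(t)|\le|K_v(0)|+M_v t$ for all $t\in[0,T_{\max})$, so if $T_{\max}<\infty$ then $K(t)$ stays in a fixed compact subset of $\mathbb{R}^N$ as $t\uparrow T_{\max}$, contradicting maximality; hence $T_{\max}=\infty$, and inverting $K_v=\ln\cot r_v$ gives the unique global solution $r\colon[0,\infty)\to(0,\frac{\pi}{2})^N$ of \eqref{flow}. The only genuinely non-routine ingredient is the boundedness of $L_v$: one needs that $L_v$ stays bounded above and bounded away from zero no matter how the radii drift, and this is precisely the Gauss-Bonnet identity \eqref{gauss} together with positivity of the intersection area. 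A secondary point deserving care is the transversality claim used for smoothness, which ensures that $L_v$ is a bona fide smooth function on all of $\mathbb{R}^N$ rather than only on a proper sub-domain; note that no use is made of the combinatorial condition \eqref{condition} here, as it is irrelevant to mere long-time existence.
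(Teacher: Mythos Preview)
Your argument is correct and follows essentially the same route as the paper's proof: pass to the $K$-variables, invoke Picard--Lindel\"of using smoothness of $L_v(K)$, and deduce global existence from the Gauss--Bonnet bound $L_v<2\sum_{v<e}\Theta(e)$, which makes the vector field in \eqref{2.2} globally bounded on $\mathbb{R}^N$. Your write-up is simply more explicit about the diffeomorphism $r\leftrightarrow K$, the regularity of the right-hand side, and the no-blow-up step; one small inaccuracy in your closing commentary is the phrase ``bounded away from zero'' for $L_v$ --- in fact $L_v\to 0$ as $r_v\to\frac{\pi}{2}$, but you only use (and only need) $L_v>0$ together with the upper bound.
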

\begin{proof} Since the flow \eqref{flow} and \eqref{2.2} are equivalent by the change of variables, it suffices to consider the flow \eqref{2.2}. Note that $L_v(K)$ is a smooth function of $K.$ By Picard's theorem, we have the local existence and uniqueness of the flow for any initial data. By \eqref{gauss}, the total geodesic curvature $L_v$ of the circle $C_v$ is less than $\sum_{v<e}2\Theta(e)$ for each vertex $v$. Hence $|L_v(t)-\hat{L}_v|$  is bounded by $\sum_{v<e}2\Theta(e)+\hat{L}_v$. Therefore, the flow \eqref{2.2} exists for $t\in[0,\infty)$.
\end{proof}

Next, we recall the result of Nie in \cite{nie2023circle}.
\begin{thm}[Nie]\label{Nie}
Given a closed 2-cell embedding $G=(V,E,F)$ in a closed surface $\Sigma$ with intersection angles $\{\Theta(e)\}_{e\in E}$ and the prescribed total geodesic curvatures $\{\hat{L}_v\}_{v\in V}$, the following statements hold: 
\begin{enumerate}
	\item $K=(K_{v_1},...,K_{v_{N}})$ is a critical point of $\mathcal{E},$ defined in \eqref{eq:nie}, if and only if there is a circle pattern on $\Sigma$ whose radii are $\{r_v\}_{v\in V}$ with $K_v = \ln\cot r_v$, which realize the prescribed total geodesic curvatures.
	\item $\mathcal{E}$ is proper if and only if $\{\hat{L}_v\}_{v\in V}$ satisfies \eqref{condition}.
\end{enumerate}
\end{thm}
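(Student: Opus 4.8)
The statement has two parts, and I would treat them separately. The first assertion is a direct differentiation of the potential. By construction $\mathcal{E}_e$ is a primitive of the closed $1$-form $\omega(e)=L_1\,dK_1+L_2\,dK_2$ from Lemma~\ref{vari}, so $\partial\mathcal{E}_e/\partial K_v$ equals the total geodesic curvature of the sub-arc of $C_v$ lying in the spherical bigon associated with the edge $e$ (and it vanishes when $v$ is not incident to $e$). The bigons attached to the edges incident to $v$ tile a neighbourhood of $v$ in $S(\Sigma)$, and the corresponding sub-arcs of $C_v$ concatenate to the whole circle $C_v$: consecutive bigons share a geodesic segment issued from $v$, and their angles at $v$ add up to the cone angle $\alpha_v$. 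Hence $\sum_{e\ni v}\partial\mathcal{E}_e/\partial K_v=L_v$, and therefore $\partial\mathcal{E}/\partial K_v=L_v-\hat L_v$. Thus $K$ is a critical point of $\mathcal{E}$ if and only if $L_v=\hat L_v$ for every $v$, which is exactly the assertion that the circle pattern determined by the radii $r_v$ (with $\cot r_v=e^{K_v}$) realizes the prescribed total geodesic curvatures. This direction is essentially bookkeeping.

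For the second assertion I would argue by convex analysis. Since $\mathcal{E}$ is convex and finite on all of $\mathbb{R}^N$, it is proper (its sub-level sets are compact, equivalently $\mathcal{E}(K)\to+\infty$ as $|K|\to\infty$) if and only if its recession function $\mathcal{E}^\infty(d)=\lim_{t\to\infty}t^{-1}\bigl(\mathcal{E}(td)-\mathcal{E}(0)\bigr)$ is strictly positive for every $d\in\mathbb{R}^N\setminus\{0\}$. The recession function is additive over the edge terms, $\mathcal{E}^\infty(d)=\sum_{e=\{u,w\}}\mathcal{E}_e^\infty(d_u,d_w)-\sum_v\hat L_v d_v$, so everything reduces to one local identity:
\begin{equation*}
\mathcal{E}_e^\infty(d_u,d_w)=2\Theta(e)\,\max\{0,d_u,d_w\},
\end{equation*}
i.e.\ $\mathcal{E}_e^\infty$ is the support function of the closed triangle with vertices $(0,0)$, $(2\Theta(e),0)$, $(0,2\Theta(e))$. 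The bound ``$\le$'' is immediate: along a ray $t\mapsto td$ one has $\mathcal{E}_e^\infty(d)=\lim_t\langle(L_1,L_2),d\rangle$, and $(L_1,L_2)$ stays inside that triangle because $L_1,L_2>0$ and $L_1+L_2<2\Theta(e)$ by the Gauss--Bonnet identity \eqref{gauss}. For ``$\ge$'' one evaluates the limiting value of $(L_1,L_2)$ along each coordinate ray: with the spherical laws of sines and cosines applied to the isosceles triangle spanned by the two centres and an intersection point, the half-central angle $\phi_1$ tends to $\Theta(e)$ as $r_1\to0$, so $L_1=2\phi_1\cos r_1\to2\Theta(e)$ while $L_2\to0$; and $L_1,L_2\to0$ as $r_1,r_2\to\pi/2$. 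Since $(L_1,L_2)$ varies monotonically with $(K_1,K_2)$ by Lemma~\ref{vari}, these endpoint values pin down $\mathcal{E}_e^\infty$ exactly.

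Granting the local formula, the characterization \eqref{condition} follows from a layer-cake rewriting. For $d\in\mathbb{R}^N$ put $X_t=\{v\in V:d_v>t\}$; then $\max\{0,d_u,d_w\}=\int_0^\infty\mathbf{1}[e\in E(X_t)]\,dt$ and $d_v=\int_0^\infty\mathbf{1}[v\in X_t]\,dt-\int_{-\infty}^0\mathbf{1}[v\notin X_t]\,dt$, whence
\begin{equation*}
\mathcal{E}^\infty(d)=\int_0^\infty\Bigl(2\sum_{e\in E(X_t)}\Theta(e)-\sum_{v\in X_t}\hat L_v\Bigr)\,dt+\int_{-\infty}^0\sum_{v\notin X_t}\hat L_v\,dt.
\end{equation*}
If \eqref{condition} holds, the first integrand is $\ge0$ for every $t$ (strictly positive when $X_t\neq\emptyset$) and the second is $\ge0$ because $\hat L_v>0$; for $d\neq0$ at least one of the two integrals is genuinely positive (the first if $\max_v d_v>0$, the second if $\min_v d_v<0$), so $\mathcal{E}^\infty(d)>0$ and $\mathcal{E}$ is proper. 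Conversely, if \eqref{condition} fails for some nonempty $X\subseteq V$, taking $d=\mathbf{1}_X$ gives $\mathcal{E}^\infty(\mathbf{1}_X)=2\sum_{e\in E(X)}\Theta(e)-\sum_{v\in X}\hat L_v\le0$, so $\mathcal{E}$ is not coercive, hence not proper.

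The step I expect to be delicate is the local recession identity $\mathcal{E}_e^\infty(d_u,d_w)=2\Theta(e)\max\{0,d_u,d_w\}$: the inequality ``$\le$'' is painless, but the matching lower bound forces one to track carefully the degenerations of a spherical bigon (one radius tending to $0$, one or both tending to $\pi/2$) and to verify that the arc curvature $L_1$ really approaches the full value $2\Theta(e)$ rather than some smaller constant. Everything else — the differentiation behind the first assertion, the reduction of properness to positivity of the recession function, and the layer-cake bookkeeping — is routine once this geometric input is in place.
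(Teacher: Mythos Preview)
Your treatment of the first assertion matches the paper's: both simply compute $\partial\mathcal{E}/\partial K_v=L_v-\hat L_v$ and read off the equivalence.

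For the second assertion your route is genuinely different from the paper's. The paper does not analyse $\mathcal{E}$ along rays at all. Instead it considers the gradient map $W:r\mapsto(L_v)_v$, shows it is injective by strict convexity, open by invariance of domain, and closed in the ``coherent angle system'' $\Phi$ determined by \eqref{condition} (citing \cite{guo2007note} for the closedness step), hence surjective onto $\Phi$; then whenever $\{\hat L_v\}\in\Phi$ there is a critical point $K^*$, and Lemma~\ref{convex} converts the existence of a critical point of a strictly convex function into properness. The ``only if'' direction is dispatched in one line via \eqref{gauss}. Your argument bypasses the topology entirely: you compute the recession function $\mathcal{E}^\infty$ edge by edge, identify each $\mathcal{E}_e^\infty$ with the support function $2\Theta(e)\max\{0,d_u,d_w\}$ of the Gauss--Bonnet triangle, and then use a layer-cake identity to rewrite $\mathcal{E}^\infty(d)$ as an integral of the defect in \eqref{condition} over the super-level sets $X_t$ of $d$. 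This is self-contained convex analysis and gives both directions at once, whereas the paper's approach yields as a byproduct the global homeomorphism between radius space and $\Phi$ (the classical rigidity statement) but outsources the closedness of $W(\mathcal{R})$ in $\Phi$ to the literature. The one step you correctly flag as delicate---the exact value of $\mathcal{E}_e^\infty$, not just the upper bound coming from $(L_1,L_2)$ lying in the open triangle---amounts to checking that the image of $(K_1,K_2)\mapsto(L_1,L_2)$ is dense in that triangle; the limits you list ($L_1\to2\Theta(e)$ as $r_1\to0$, $L_i\to0$ as $r_i\to\pi/2$) together with the monotonicity from Lemma~\ref{vari} do suffice for this, though writing it out for a general direction $d$ would make the argument watertight.
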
  
\begin{proof} For the completeness of the paper, we include Nie's proof. The first statement follows from the computation of $\frac{\partial\mathcal{E}}{\partial K}$. For the second statement, we give a brief proof.

	We denote by $\mathcal{R}$ the set $(0,\frac{\pi}{2})^{N}$. Given the prescribed total geodesic curvatures $\{\hat{L}_v\}_{v\in V}$, we denote by $\mathcal{E}_0$ another strictly convex potential $\mathcal{E}+\sum_{v\in V}\hat{L}_vK_v$. We consider the map $W = (\nabla\mathcal{E}_0)\circ G$, where $G$ is a function which maps from $\mathcal{R}$ to $\mathbb{R}^{N}$ with
	\begin{align*}
	G(r_1,...,r_{N})=(\ln\cot r_1,...,\ln\cot r_{N}).
	\end{align*}
	Since $\mathcal{E}_0$ is strictly convex, the map $\nabla\mathcal{E}_0$ is injective. So that $W$ is also injective. As in \cite{MR2022715}, we call the domain $\Phi$ determined by \eqref{condition} in $\mathbb{R}^{N}$ a \enquote{coherent angle system}. Using the invariance of domain theorem we can prove that $W$ is an open map whose image is contained in $\Phi$. One can also prove that $W(\mathcal{R})$ is closed in $\Phi$, see arguments in \cite{guo2007note}. So the image of $W$ equals to $\Phi$ by the connectivity of $\Phi$. Therefore, if $\{\hat{L}_v\}_{v\in V}$ belongs to the coherent angle system, then there is a $K^* = (K^*_{v_1},...,K^*_{v_N}) $ such that $\nabla \mathcal{E}_0(K^*)=(\hat{L}_{v_1},...,\hat{L}_{v_N})$. By the definition of $\mathcal{E}_0$, $K^*$ is the critical point of $\mathcal{E}$, so that $\mathcal{E}$ is proper by strict convexity. This finishes the proof of the \enquote{if} part. The \enquote{only if} part can be derived with the help of \eqref{gauss}.
\end{proof} 

  A prescribed geodesic curvature flow \eqref{flow} can be changed into a negative gradient flow \eqref{2.2} of the strictly convex potential function $\mathcal{E}$. Therefore we can use a lemma in \cite{MR4334399}.
 
\begin{lem}\label{convex}
	Let $H$ be a strictly convex smooth function defined in a convex set $\Omega\subseteq \mathbb{R}^{N}$ with a critical point $P\in \Omega$. Then the following properties holds:
	\begin{enumerate}
		\item $P$ is the unique global minimum point of $H$.
		\item If $\Omega$ is unbounded, then $\lim_{|x|\rightarrow\infty}H(x)=\infty$.
	\end{enumerate}
\end{lem}
	We also recall a lemma in the theory of ordinary differential equations:
	\begin{lem}[\cite{MR0140742}]\label{convergent}
		Let $U$ be an open set in $\mathbb{R}^n$ and $f\in C^1(U,\mathbb{R}^n)$. Consider an autonomous ordinary differential system
		\begin{align}
			\ddt{x(t)}=f(x(t))~~~x(t)\in U.\label{ode}
		\end{align}
	Assume $x^*$ is a critical point of $f$, i.e. $f(x^*)=0$. If all eigenvalues of the Jacobian matrix $\frac{\partial f}{\partial x}(x^*)$ have negative real part, then $x^*$ is an asymptotically stable point. More specifically, there exists a neighborhood $\tilde{U}\subset U$ of $x^*$ such that, for any initial $x(0)\in \tilde{U}$, the solution of \eqref{ode} exists for all times $t\in [0,+\infty)$ and converges exponentially fast to $x^*$. 
	\end{lem}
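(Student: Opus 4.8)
The plan is to run the classical linearized-stability argument. After translating so that $x^{*}=0$, write $A=\frac{\partial f}{\partial x}(0)$ and $f(x)=Ax+g(x)$, where $g\in C^{1}$ with $g(0)=0$ and $\frac{\partial g}{\partial x}(0)=0$; consequently, for every $\varepsilon>0$ there is $\rho>0$ such that the closed ball $\overline{B_{\rho}}$ lies in $U$ and $|g(x)|\le\varepsilon|x|$ on $\overline{B_{\rho}}$. Since every eigenvalue of $A$ has negative real part, pick $\beta>0$ with $\beta<-\max_{i}\mathrm{Re}\,\lambda_{i}(A)$; using the Jordan normal form of $A$ (or an adapted norm) one gets a constant $M\ge 1$ with $\|e^{tA}\|\le Me^{-\beta t}$ for all $t\ge 0$. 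Because $f$ is $C^{1}$, Picard's theorem gives a unique local solution through any initial point, so the task reduces to controlling it.

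Next I would use the variation-of-constants (Duhamel) formula $x(t)=e^{tA}x(0)+\int_{0}^{t}e^{(t-s)A}g(x(s))\,ds$. On any interval $[0,T]$ on which $|x(s)|\le\rho$, this gives $|x(t)|\le Me^{-\beta t}|x(0)|+M\varepsilon\int_{0}^{t}e^{-\beta(t-s)}|x(s)|\,ds$; putting $u(s)=e^{\beta s}|x(s)|$ and applying Gr\"onwall's inequality yields $|x(t)|\le M|x(0)|\,e^{-(\beta-M\varepsilon)t}$. Fixing $\varepsilon$ once and for all so small that $M\varepsilon\le\beta/2$, this reads $|x(t)|\le M|x(0)|\,e^{-\beta t/2}$ whenever the trajectory has stayed in $\overline{B_{\rho}}$.

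The only delicate point — and the main obstacle, modest as it is — is upgrading this conditional bound to an unconditional, global one. I would take $\tilde U=B_{\delta_{0}}$ with $\delta_{0}=\rho/(2M)$. For $x(0)\in\tilde U$, a continuity/bootstrap argument shows the trajectory never reaches the sphere $|x|=\rho$: at a hypothetical first such time $t_{0}$ the estimate above would force $\rho=|x(t_{0})|\le M\delta_{0}e^{-\beta t_{0}/2}<\rho$, a contradiction. Hence the solution remains in the compact set $\overline{B_{\rho}}\subset U$, so by the standard criterion for the maximal interval of existence it is defined for all $t\ge 0$, and $|x(t)-x^{*}|\le M|x(0)-x^{*}|\,e^{-\beta t/2}$ holds on $[0,\infty)$. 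This is precisely asymptotic stability with an exponential rate.

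If one prefers to avoid Gr\"onwall, the same conclusion follows from a Lyapunov function: let $P$ be the unique positive definite solution of $A^{T}P+PA=-I$ and set $W(x)=x^{T}Px$; then along the flow $\dot W=-|x|^{2}+2x^{T}Pg(x)\le-\tfrac12|x|^{2}\le-cW$ for $|x|$ small, so $W(x(t))\le W(x(0))e^{-ct}$, and the bootstrap step is carried out on sublevel sets of $W$ instead of on balls. Either route needs nothing beyond the spectral decay bound for $e^{tA}$ and the escape lemma for maximal solutions; applied with $f(x)=-(L_v(x)-\hat L_v)$ and $x^{*}=K^{*}$, it yields the exponential convergence asserted for the flow \eqref{2.2}.
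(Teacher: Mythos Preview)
Your argument is correct and is the classical linearized-stability proof (variation of constants plus Gr\"onwall, or equivalently a quadratic Lyapunov function). Note, however, that the paper does not give any proof of this lemma at all: it is merely quoted from the cited reference \cite{MR0140742} as a standard fact in the theory of ordinary differential equations, and used as a black box in the proof of Theorem~\ref{thm1}. So there is nothing to compare on the paper's side; your write-up simply supplies what the paper omits.
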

Now we prove the main result.
\begin{proof}[Proof of Theorem \ref{thm1}]
	For \enquote{i$\Rightarrow$ii}, suppose that the flow \eqref{flow} converges to $r^*\in (0,\frac{\pi}{2})^{N}$ for some initial data. The flow \eqref{2.2} also converges to $K^*=\ln\cot r^*$. Picking a sequence of time $t_n=n\rightarrow\infty$, by the mean value theorem we have
	\begin{align}
		\mathcal{E}(K(n+1))-\mathcal{E}(K(n)) = -|\nabla \mathcal{E}(K(\eta_n))|^2 \text{~~~for some } \eta_n\in[n,n+1].\label{estimate}
	\end{align}
	Then by the convergence of $K(t)$ and the smoothness of $\mathcal{E}$, we have $\nabla\mathcal{E}(K^*)=0$. Therefore $K^*$ is the critical point of $\mathcal{E}$. Since the potential function $\mathcal{E}$ is strictly convex in $\mathbb{R}^{N}$, it is proper due to Lemma \ref{convex}. Then Theorem \ref{Nie} tells us the prescribed total geodesic curvatures $\{\hat{L}_v\}_{v\in V}$	satisfies \eqref{condition}.
	\\\hspace*{16pt}
	We prove \enquote{ii$\Rightarrow $i}. If the prescribed geodesic curvature satisfies \eqref{condition}, then by Theorem \ref{Nie}, the potential function $\mathcal{E}$ is proper. Therefore $\mathcal{E}$ has a critical point and is bounded from below. Since $\mathcal{E}$ decreases along the flow \eqref{2.2}, the flow $K(t)$ is contained in a compact subset of $\mathbb{R}^N$. Due to the monotonicity of $\mathcal{E}(K(t))$, $\{\mathcal{E}(K(n))\}_{n=1}^{\infty}$ converges to a finite value $\mathcal{E}_{\infty}$ as $n\rightarrow\infty$.

	By \eqref{estimate}, we have 
	\begin{align*}
		\lim_{n\rightarrow\infty} |\nabla \mathcal{E}(K(\eta_n))|^2=0.
	\end{align*}
     Since $\{K(\eta_n)\}_{n=1}^{\infty}$ is contained in a compact set, there exists a subsequence of $\{\eta_n\}_{n=1}^{\infty}$ denoted by
     $\{\eta_{n_k}\}_{k=1}^{\infty}$ and $K_{\infty}\in \mathbb{R}^N$ such that
     \begin{align*}
     	K(\eta_{n_k})\rightarrow K_{\infty}, ~~\text{as}~~k\rightarrow\infty.
     \end{align*}
 	 Therefore $\nabla\mathcal{E}(K_{\infty})=0$. Since $-\frac{\partial L}{\partial K}$ is negative-definite matrix by Lemma \ref{vari}, with the help of Lemma \ref{convergent}, $K(t)$ converges exponentially fast to $K_{\infty}$. Hence we finish the proof of Theorem \ref{thm1}.
\end{proof}
Finally, we give an example that some solution of the prescribed geodesic curvature problem could yield a smooth spherical metric.
\begin{ex}
Let $\Sigma$ be the sphere and $G=(V,E,F)$ be a triangulation of $\Sigma,$ isomorphic to the boundary of a tetrahedron. Assume that $\Theta(e)=\frac{\pi}{3}, \forall e\in E$ and $\hat{L}_v=\frac{2\pi}{3}, \forall v\in V.$
 The solution of the prescribed geodesic curvature problem is given by 
 $r_v=\arccos\frac{1}{3}, \forall v\in V.$ This produces a smooth metric structure $S(\Sigma),$ which is a standard spherical tiling of the sphere. 
 

\end{ex}

\textbf{Acknowledgements.} Ge is supported by NSFC, no.12122119. B. Hua is supported by NSFC, no.11831004, and by Shanghai Science and Technology Program [Project No. 22JC1400100].

	\section{Appendix}

 In this part, we use the setting in Section 1 and 2. Let $k_i,K_i,\beta_i,$$\theta$ denote $k_{v_i},K_{v_i},\frac{\angle Av_iB}{2}$,
 $\Theta(e_0)$ as in Figure \ref{fig02}. We give a detailed proof of Lemma \ref{vari}. We start with the cotangent 4-part formula for spherical triangles.
 \begin{prop}[Cotangent 4-part formula]
 	\begin{align}
 		\cot(\beta_1)=\frac{1}{\sin\theta}(\cot r_2\sin r_1+\cos r_1\cos\theta)\label{4.1}.
 	\end{align}
 \end{prop}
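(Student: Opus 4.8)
The plan is to recognize \eqref{4.1} as an instance of the classical cotangent (four-parts) formula of spherical trigonometry, applied to a suitable auxiliary triangle. First I would cut the spherical bigon $v_1Av_2B$ along the geodesic $v_1v_2$. Since the bigon is the intersection of two spherical disks centered at $v_1$ and $v_2$, it is invariant under the reflection in the great circle through $v_1$ and $v_2$, and this reflection interchanges $A$ and $B$; hence the two pieces $v_1Av_2$ and $v_1Bv_2$ are congruent spherical triangles, and $v_1v_2$ bisects the bigon angle $\angle Av_1B=2\beta_1$ at $v_1$ (and likewise at $v_2$). Thus in the spherical triangle $v_1Av_2$ the angle at $v_1$ is $\beta_1$, the angle at $A$ is the full bigon angle $\angle v_1Av_2=\pi-\theta$, and the sides issuing from $A$ have lengths $|v_1A|=r_1$ and $|v_2A|=r_2$. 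Identity \eqref{4.1} is then exactly the four-parts formula for the consecutive parts $r_2,\pi-\theta,r_1,\beta_1$ (a side, the angle at $A$, a side, and the angle at $v_1$).

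For a self-contained derivation I would instead use the two laws of cosines and the law of sines for the triangle $v_1Av_2$. Writing $d=|v_1v_2|$, the law of cosines at $A$ gives $\cos d=\cos r_1\cos r_2-\sin r_1\sin r_2\cos\theta$ (using $\cos(\pi-\theta)=-\cos\theta$); the law of cosines for the side $r_2$ opposite $\beta_1$ gives $\cos r_2=\cos r_1\cos d+\sin r_1\sin d\cos\beta_1$; and the law of sines gives $\sin\beta_1\sin d=\sin r_2\sin\theta$. Dividing the second identity by the third eliminates $\sin d$ and yields
\begin{align*}
\sin r_1\cot\beta_1=\frac{\cos r_2-\cos r_1\cos d}{\sin r_2\sin\theta}.
\end{align*}
Substituting the expression for $\cos d$ and using $1-\cos^2 r_1=\sin^2 r_1$, the numerator factors as $\sin r_1\bigl(\sin r_1\cos r_2+\cos r_1\sin r_2\cos\theta\bigr)$; cancelling $\sin r_1>0$ gives
\begin{align*}
\cot\beta_1=\frac{\sin r_1\cos r_2+\cos r_1\sin r_2\cos\theta}{\sin r_2\sin\theta}=\frac{1}{\sin\theta}\bigl(\cot r_2\sin r_1+\cos r_1\cos\theta\bigr),
\end{align*}
which is \eqref{4.1}.

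I do not expect a genuine obstacle: once the auxiliary triangle is set up correctly the rest is routine trigonometric manipulation. The only points that require care are the geometric identification of the angle at $A$ as $\pi-\theta$ rather than half of it (so that the sign in $\cos(\pi-\theta)=-\cos\theta$ enters the law of cosines correctly) and the fact that the diagonal $v_1v_2$ bisects the angle of the bigon at $v_1$, which is what makes $\beta_1=\angle Av_1v_2$ the angle appearing in the triangle.
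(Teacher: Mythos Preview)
Your argument is correct: the identification of the triangle $v_1Av_2$ with angle $\beta_1$ at $v_1$, angle $\pi-\theta$ at $A$, and sides $r_1,r_2$ is exactly right, and the derivation from the two cosine laws and the sine law is clean and complete. The paper itself omits the proof entirely, simply citing it as a classical formula of spherical trigonometry, so your write-up goes further than the paper does rather than diverging from it.
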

It is a useful formula in classical spherical geometry, and we omit its proof. We are ready to prove Lemma \ref{vari}. 
\begin{proof}[Proof of Lemma \ref{vari}]
	 Since the geodesic curvature at $\partial D_i$ is $k_i=\cot r_i$, we can rewrite \eqref{4.1} as
	\begin{align}
		\cot(\beta_1)=\frac{1}{\sin\theta}(k_2\sin r_1+\cos r_1\cos\theta)\label{4.2}.
	\end{align}
	By differentiating the above equation in $k_2$ and using the relation $l_1=2\beta_1\sin r_1$, we have

\begin{align*}
	-\frac{\partial\beta_1}{\partial k_2}\frac{1}{\sin^2\beta_1}=\frac{\sin r_1}{\sin\theta}.
\end{align*}
Thus we have
\begin{align*}
	\frac{\partial l_1}{\partial k_2}=\frac{-2\sin^2\beta_1\sin^2r_1}{\sin \theta}.
\end{align*}
By using the sine law in spherical trigonometry, i.e. $\frac{\sin r_1}{\sin\beta_2}=\frac{\sin r_2}{\sin\beta_1}$, we have $\frac{\partial l_1}{\partial k_2}=\frac{\partial l_2}{\partial k_1}$ and $\frac{\partial L_1}{\partial K_2}=\frac{\partial L_2}{\partial K_1}$. And we get
\begin{align*}
	\frac{\partial L_1}{\partial K_2}=k_1k_2\frac{-2\sin^2\beta_1\sin^2r_1}{\sin \theta}.
\end{align*}
Now we compute $\frac{\partial l_1}{\partial k_1}$. Again differentiating \eqref{4.2} in $k_1$, we have
\begin{align*}
	\frac{\partial l_1}{\partial k_1}=2\sin r_1[\frac{\sin^2\beta_1}{\sin\theta}(k_2\cos r_1\sin^2r_1-\cos\theta\sin^3r_1)-\frac{l_1}{2}\cos r_1].
\end{align*}
Noting that $\frac{\partial L_1}{\partial k_1}=l_1+k_1\frac{\partial l_1}{\partial k_1}$, we have
\begin{align*}
	\frac{\partial L_1}{\partial K_1}=k_1l_1\sin^2r_1+\frac{2\sin^2\beta_1}{\sin\theta}k_1k_2\sin^2r_1\cos^2r_1-2k_1\cot\theta\sin^2\beta_1\sin^3r_1\cos r_1.
\end{align*}
Using \eqref{4.1} again, we get 
\begin{align}
	\frac{\partial(L_1+L_2)}{\partial K_1} = \cos r_1\sin^2 r_1(2\beta_1-\sin 2\beta_1),
\end{align}
which is positive when $0<r_1<\frac{\pi}{2}$. Thus we finish the proof of Lemma \ref{vari}.
\end{proof}

\bibliographystyle{plain}
\bibliography{reference}

\noindent Huabin Ge, hbge@ruc.edu.cn\\[2pt]
\emph{School of Mathematics, Renmin University of China, Beijing, 100872, P.R. China} 
\\

\noindent Bobo Hua, bobohua@fudan.edu.cn\\[2pt]
\emph{School of Mathematical Sciences, LMNS, Fudan University, Shanghai, 200433, P.R. China}
\\

\noindent Puchun Zhou, pczhou22@m.fudan.edu.cn\\[2pt]
\emph{School of Mathematical Sciences, Fudan University, Shanghai, 200433, P.R. China}
\end{document}